\documentclass[11pt,reqno]{amsart}

\setlength{\parindent}{0pt} \setlength{\textwidth}{5.8in}
\setlength{\oddsidemargin}{0.4in}
\setlength{\evensidemargin}{0.4in}
\parskip = 4pt
\newtheorem{proposition}{Proposition}[section]

\newtheorem{corollary}[proposition]{Corollary}
\newtheorem{theorem}[proposition]{Theorem}

\theoremstyle{definition}
\newtheorem{definition}[proposition]{Definition}
\newtheorem{example}[proposition]{Example}

\newcommand{\thlabel}[1]{\label{th:#1}}
\newcommand{\thref}[1]{Theorem~\ref{th:#1}}
\newcommand{\selabel}[1]{\label{se:#1}}
\newcommand{\seref}[1]{Section~\ref{se:#1}}

\newcommand{\prlabel}[1]{\label{pr:#1}}
\newcommand{\prref}[1]{Proposition~\ref{pr:#1}}
\newcommand{\colabel}[1]{\label{co:#1}}
\newcommand{\coref}[1]{Corollary~\ref{co:#1}}

\def\ot{\otimes}

\def\CC{{\mathbb C}}

\newcommand{\Cc}{\mathcal{C}}

\def\*C{{}^*\hspace*{-1pt}{\Cc}}
\def\text#1{{\rm {\rm #1}}}

\input xy
\xyoption {all} \CompileMatrices

\usepackage{amssymb}
\usepackage{color,amssymb,graphicx,amscd,amsmath}
\usepackage[colorlinks,urlcolor=blue,linkcolor=blue,citecolor=blue]{hyperref}

\begin{document}
\title[The maximal dimension of unital subalgebras of the matrix algebra]
{The maximal dimension of unital subalgebras of the matrix
algebra}

\author{A. L. Agore}
\address{Faculty of Engineering, Vrije Universiteit Brussel, Pleinlaan 2, B-1050 Brussels, Belgium \textbf{and} ''Simion Stoilow'' Institute of Mathematics of the Romanian Academy, P.O. Box 1-764, 014700 Bucharest, Romania} \email{ana.agore@vub.ac.be and
ana.agore@gmail.com}

\thanks{The author is Postdoctoral Fellow of the Fund for Scientific
Research-Flanders (Belgium) (F.W.O. Vlaanderen). This research is
part of the grant no. 88/05.10.2011 of the Romanian National
Authority for Scientific Research, CNCS-UEFISCDI}

\subjclass[2010]{16D70, 16Z05} \keywords{matrix (co)algebra,
parabolic subalgebras of matrix algebras, parabolic coideals of
matrix coalgebras, maximal dimension of subalgebras}


\maketitle

\begin{abstract}
We prove that over a field of characteristic zero the maximal
dimension of a proper unital subalgebra in the $n \times n$ matrix
algebra is $n^2 - n + 1$ and furthermore this upper bound is
attained for the so-called parabolic subalgebras. We also
investigate the corresponding notion of parabolic coideals for
matrix coalgebras and prove that the minimal dimension of a
non-zero coideal of the matrix coalgebra ${\mathcal M}^n (k)$ is
$n-1$.
\end{abstract}

\section*{Introduction}

The problem of finding the minimal/maximal dimension of proper
subobjects (possibly satisfying some extra assumptions) of a given
mathematical object is a natural one and it was studied in various
settings. The first result in this direction was proved by Schur
\cite{Sch} and asserts that $[\frac{n^{2}}{4}] + 1$ is an upper
bound for the dimensions of commutative subalgebras of
$\mathcal{M}_{n}(\CC)$. Later on, Jacobson \cite{J} showed that
the same bound is valid over any field $K$. An elegant and simpler
proof of Schur's theorem was given by Mirzakhani in \cite{MM}.
These results allowed for the introduction of the \emph{Schur
invariant} in Lie algebra theory as the maximal dimension of
abelian subalgebras of a given Lie algebra. A vast literature
emerged from the study of this invariant which plays an important
role in many aspects of Lie algebra theory, see for example
\cite{BCM, NePo} and the references therein. To give just one
example, the Schur invariant has been completely determined for
semisimple Lie algebras by Malcev in \cite{Mal}. In this note we
address the problem of finding the maximal dimension of
subalgebras of the matrix algebra $\mathcal{M}_{n}(K)$ over a
field $K$ of characteristic zero. Our main result is \thref{imp}
which gives an upper bound for the dimension of subalgebras in
$\mathcal{M}_{n}(K)$ and it proves that this maximal dimension is
attained for the so-called parabolic subalgebras \cite{WPW}, which
are the associative algebra counterpart of the well-known
parabolic Lie subalgebras (see \cite{WY, W} for more details). A
subalgebra $\mathcal{A}$ of $\mathcal{M}_{n}(K)$ is called
parabolic if it is similar to an algebra consisting of all
matrices having non-overlapping blocks of $n_{i} \times n_{i}$
matrices on the diagonal, $i = 1, 2, ..., s$, with non-zero
entries only in these blocks or above them, where $(n_{1}, n_{2},
..., n_{s})$ is a partition of $n$. As a consequence of
\thref{imp} we prove in \coref{maxi} that the maximal dimension of
a proper unital subalgebra of $\mathcal{M}_{n}(K)$ is equal to
$n^2 - n + 1$. A related result was obtained in \cite{am-2013c}
where it was proved that $\mathcal{M}_{n}(K)$ has no subalgebras
of codimension equal to $1$. A somewhat similar problem was
considered recently in \cite{BCM, CT} in the context of Lie
algebras and in \cite{M} for Lie algebras as well as for
associative algebras. In the above mentioned papers the authors
are interested in finding bounds for the dimensions of abelian
subalgebras or abelian ideals.

The paper ends with a brief discussion on matrix coalgebras (see
\seref{prel} for the definition and further details). We introduce
the notion of parabolic coideals for matrix coalgebras and derive
some of their features based on the duality between the matrix
algebra and the matrix coalgebra. As an application, we prove that
the minimal dimension of a non-zero coideal of the matrix
coalgebra $\mathcal{M}^{n}(K)$ is $n-1$.

\section{Preliminaries}\selabel{prel}
Unless otherwise stated, all vector spaces, linear maps, algebras
and coalgebras are over an arbitrary field $K$. In some specific
cases we will assume that $K$ is algebraically closed of characteristic zero. For a vector space $V$ we denote by $V^{*} :
= {\rm Hom}_K (V, \, K)$ the dual of $V$. If $S$ is a subset of $V$ we
set $S^{\perp} : = \{v \in V^{*} ~|~ v(S) = 0\}$. Similarly, if
$Y$ is a subset of $V^{*}$ we denote $Y^{\perp} : = \{x \in V ~|~
f(x) = 0,\, {\rm for} \, {\rm all}\, f \in Y\}$. If $X$ is a
subspace of the vector space V then the codimension of $X$ is
${\rm dim}_{K}\,(V/X)$.

By an algebra (coalgebra) we mean an associative (coassociative),
unital (counital)  algebra (coalgebra) over $K$. If $A$ is an
algebra, ${\rm rad}\, A$ stands for the Wedderburn-Artin radical
of $A$. Wedderburn's main theorem (\cite[Theorem 2.17]{K}) is a
fundamental result on the structure of finite dimensional
algebras, being the associative algebra counterpart of the famous
Levi decomposition of a Lie algebra: it asserts that for any
finite dimensional algebra $A$ over a field $K$ of characteristic
zero, there exists a semisimple subalgebra $S$ of $A$ isomorphic
to $A / {\rm rad}\, A$ such that $A = S \oplus {\rm rad}\, A$.
Moreover, over an algebraically closed field any semisimple
finite-dimensional algebra is a direct sum of matrix algebras over
the same field. We refer the reader to \cite[Chapter II]{K} for a
detailed account on the Wedderburn-Artin theory. For an arbitrary
integer $n \geq 2$ we let $\mathcal{M}_{n}(K)$ denote the algebra
of $n \times n$ matrices over the field $K$ while
$\mathcal{U}_{n}(K)$ and $\overline{\mathcal{U}_{n}(K)}$ stand for
the set of all upper triangular matrices in $\mathcal{M}_{n}(K)$,
respectively the set of all strictly upper triangular matrices in
$\mathcal{M}_{n}(K)$. We denote by $e_{i,\,j} \in
\mathcal{M}_{n}(K)$ the matrix having $1$ in the $(i,j)^{th}$
position and zeros elsewhere. If $\mathcal{A}$ is a subset of
$\mathcal{M}_{n}(K)$, then we denote $C \mathcal{A} C^{-1} = \{C A
C^{-1} ~|~ A \in \mathcal{A}\}$.

If $A$ is a finite dimensional algebra then $A^{*}$ has a natural
coalgebra structure called the dual coalgebra on $A$.  In the same
manner, if $C$ is a (not necessarily finite dimensional) coalgebra
then $C^{*}$ inherits an algebra structure called the dual algebra
on $C$. $\mathcal{M}^{n}(K)$ denotes the matrix coalgebra, i.e.
$\mathcal{M}^{n}(K) = \mathcal{M}_{n}(K)$ as vector spaces with
the coalgebra structure given for all $i$, $j = 1, 2, ..., n$ by:
$$\Delta: \mathcal{M}^{n}(K) \to \mathcal{M}^{n}(K) \otimes \mathcal{M}^{n}(K), \qquad \Delta(e_{i,\,j}) = \sum_{k=1}^{n} e_{i,\,k} \otimes e_{k,\, j}$$
$$\varepsilon: \mathcal{M}^{n}(K) \to K, \qquad \varepsilon(e_{i,\,j}) = \delta_{i,\,j}$$
It is well-known that $\mathcal{M}_{n}(K)^{*} $ and
$\mathcal{M}^{n}(K)$ are isomorphic as coalgebras. Similarly,
$\mathcal{M}^{n}(K)^{*}$ and $\mathcal{M}_{n}(K)$ are isomorphic
as algebras. For all unexplained notions or results from the
theory of coalgebras we refer the reader to \cite{BW, SW}.

We also make use of a well-known result on the dimension of
subspaces of nilpotent matrices which first appeared in \cite{G}.
An improved version of this result appears in \cite{S}, where the
cardinality assumption on the field $K$ is removed. More
precisely, the result in \cite{S} states that if $\mathcal{S}$ is
a subspace of the vector space of $n \times n$ matrices over an
arbitrary field $K$ and $\mathcal{S}$ consists of nilpotent
matrices, then the maximal dimension of $\mathcal{S}$ is
$\frac{n(n-1)}{2}$. Moreover, if equality holds then $\mathcal{S}$
is conjugate to $\overline{\mathcal{U}_{n}(K)}$.

\section{Parabolic subalgebras of matrix algebras}

By analogy with Lie algebra theory (\cite{WY}), a proper
subalgebra $\mathcal{A}$ of the matrix algebra
$\mathcal{M}_{n}(K)$ is called \emph{parabolic} \cite{WPW} if it
is similar to an algebra which contains $\mathcal{U}_{n}(K)$, i.e.
there exists an invertible matrix $U$ such that
$\mathcal{U}_{n}(K) \subseteq U \mathcal{A} U^{-1}$. As we will
see, this concept will play an important role in determining the
maximal dimension of a subalgebra in a matrix algebra. It was
proved in \cite{WPW} that $\mathcal{A}$ is a parabolic subalgebra
of $\mathcal{M}_{n}(K)$ if and only if there exists a set of
positive integers $n_{1}$, $n_{2}$, ..., $n_{s}$ with
$\sum_{i=1}^{s} n_{i} = n$ such that $\mathcal{A}$ is similar to
the algebra of all matrices having non-overlapping blocks of
$n_{i} \times n_{i}$ matrices on the diagonal with non-zero
entries only in these blocks or above them. Moreover, it is
straightforward to see that ${\rm dim}\, A = \frac{n^{2}}{2} +
\sum_{i=1}^{s}\frac{n_{i}^{2}}{2}$.

\begin{definition}
A parabolic subalgebra $\mathcal{A}$ of $\mathcal{M}_{n}(K)$
determined by the set of positive integers $n_{1}$, $n_{2}$, ...,
$n_{s}$ will be called a parabolic subalgebra of type $(n_{1},
n_{2}, ..., n_{s})$. If $s=2$ then $\mathcal{A}$ will be called a
maximal parabolic subalgebra.
\end{definition}

Our next result shows that a maximal parabolic subalgebra is in
fact a maximal proper subalgebra of $\mathcal{M}_{n}(K)$.

\begin{proposition}\prlabel{1.2}
The maximal parabolic subalgebras are maximal proper subalgebras
of $\mathcal{M}_{n}(K)$.
\end{proposition}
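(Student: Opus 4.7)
The plan is to conjugate so that $\mathcal{A}$ is in standard block form and then exhibit a single stray matrix $Y \in \mathcal{B}\setminus\mathcal{A}$ that generates, together with $\mathcal{A}$, all of $\mathcal{M}_n(K)$.

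After conjugation we may assume $\mathcal{A}$ consists of all $n\times n$ matrices that are zero in the lower--left $n_2\times n_1$ block, where $n_1+n_2=n$. Thus $\mathcal{A}$ contains the matrix units $e_{k,\ell}$ for \emph{every} pair $(k,\ell)$ except those with $k>n_1$ and $\ell\le n_1$. Suppose for contradiction that a subalgebra $\mathcal{B}$ satisfies $\mathcal{A}\subsetneq\mathcal{B}\subsetneq\mathcal{M}_n(K)$, and pick $X\in\mathcal{B}\setminus\mathcal{A}$. Write $X=A+Y$, where $A$ is the part of $X$ supported outside the lower--left $n_2\times n_1$ block and $Y$ is the part supported inside that block. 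By construction $A\in\mathcal{A}\subseteq\mathcal{B}$, so $Y=X-A\in\mathcal{B}$ and $Y\ne 0$.

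Now choose an entry $Y_{i_0,j_0}\ne 0$, necessarily with $i_0>n_1$ and $j_0\le n_1$. The first step of the multiplication argument is the identity
\[
e_{k,i_0}\,Y\,e_{j_0,\ell} \;=\; Y_{i_0,j_0}\,e_{k,\ell},
\]
valid for any $k,\ell\in\{1,\dots,n\}$. The crucial observation is that both matrix units $e_{k,i_0}$ and $e_{j_0,\ell}$ already lie in $\mathcal{A}$: the column index $i_0$ satisfies $i_0>n_1$ (so $e_{k,i_0}$ is never in the forbidden lower--left block regardless of $k$), and the row index $j_0$ satisfies $j_0\le n_1$ (so $e_{j_0,\ell}$ is never in that block regardless of $\ell$). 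Therefore $e_{k,i_0},\,e_{j_0,\ell}\in\mathcal{A}\subseteq\mathcal{B}$, and consequently $e_{k,\ell}\in\mathcal{B}$ for every $k,\ell$. Since the $e_{k,\ell}$ span $\mathcal{M}_n(K)$, this forces $\mathcal{B}=\mathcal{M}_n(K)$, contradicting properness.

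There is no serious obstacle here beyond bookkeeping with the block indices; the only point that needs to be handled carefully is verifying that $i_0$ lies in the ``bottom'' block of rows while $j_0$ lies in the ``left'' block of columns, which is exactly what guarantees that the sandwiching matrix units $e_{k,i_0}$ and $e_{j_0,\ell}$ sit inside $\mathcal{A}$ for every choice of the free indices $k$ and $\ell$. This is precisely the feature that fails for parabolic subalgebras of type $(n_1,\dots,n_s)$ with $s\ge 3$, which is consistent with the expectation that only the $s=2$ case yields a \emph{maximal} proper subalgebra.
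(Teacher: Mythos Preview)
Your proof is correct and follows essentially the same approach as the paper: conjugate to standard block form, take an element outside $\mathcal{A}$, and sandwich it with matrix units already in $\mathcal{A}$ to produce every $e_{k,\ell}$. The only cosmetic difference is that the paper first isolates a single $e_{i,j}$ via $\alpha^{-1}e_{i,i}\,x\,e_{j,j}$ and then propagates row by row and column by column, whereas you subtract off the block-upper-triangular part to obtain $Y$ and then recover all $e_{k,\ell}$ in a single sandwich $e_{k,i_0}\,Y\,e_{j_0,\ell}$; both arguments rest on the identical observation that $i_0>n_1$ and $j_0\le n_1$ force $e_{k,i_0},\,e_{j_0,\ell}\in\mathcal{A}$ for every $k,\ell$.
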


\begin{proof}
Consider $\mathcal{A}$ to be a parabolic subalgebra having on the
diagonal two blocks $U$ and $V$ of dimensions $l \times l$ and
respectively $(n-l) \times (n-l)$ with $l \geq 1$ and assume there
exists a subalgebra $\mathcal{T}$ of $\mathcal{M}_{n}(K)$ such
that $\mathcal{A} \subset \mathcal{T}$, $\mathcal{A} \neq
\mathcal{T}$. The elements of $\mathcal{T}$ are linear
combinations of the matrix units $e_{i, \, j}$ and there must be an element $x \in \mathcal{T} - \mathcal{A}$ that has a non-zero entry $\alpha$ in a position $(i,\, j)$ with $i >  l$ and $j \leqslant l$. Then $e_{i,\, j} \in \mathcal{T} $ since $e_{i,\,j} = \alpha^{-1} \, e_{i,\,i} \, x \, e_{j,\,j}$. Since $i > l$, $e_{m,\,i} \in \mathcal{A}$ for all $m$ and hence $e_{m, \, j} = e_{m,\, i} \, e_{i,\,j}$ belongs to $\mathcal{T}$ for all $m$. Similarly $e_{j,\,k} \in \mathcal{A}$ for all $k$ and so $e_{i,\,k}  = e_{i,\,j}\, e_{j,\,k} \in \mathcal{T}$ for all $k$. Thus if $e_{i,\, j}$ belongs to $\mathcal{T}$, then so do all the matrix units from row $i$ and column $j$. Using the same reasoning for all these elements we can conclude that $\mathcal{T} = \mathcal{M}_{n}(K)$ which finishes the proof.

\end{proof}

\begin{example}
The parabolic subalgebras of $\mathcal{M}_{3}(K)$ are similar to
the following subalgebras:
$$
\begin{pmatrix} K & K & K \\
0 & K & K\\
0 & 0 & K \end{pmatrix} \qquad \begin{pmatrix} K & K & K \\
K & K & K\\
0 & 0 & K \end{pmatrix} \qquad \begin{pmatrix} K & K & K \\
0 & K & K\\
0 & K & K \end{pmatrix}
$$
The last two algebras are maximal proper subalgebras of
$\mathcal{M}_{3}(K)$.
\end{example}

If $K$ is an algebraically closed field of characteristic zero and
$\mathcal{A}$ is a finite dimensional $K$-algebra then by
Wedderburn's theorem \cite{K}, $\mathcal{A}$ has a semisimple part
which is a direct sum of matrix algebras over $K$ of dimensions
$n_{1}^{2}$, $n_{2}^{2}$, ..., $n_{s}^{2}$. In order for
$\mathcal{A}$ to be a subalgebra of $\mathcal{M}_{n}(K)$ we need
to have $\sum_{i=1}^{s} n_{i} \leq n$. However, since we are
trying to maximize the dimension of $\mathcal{A}$ we will assume
that $\sum_{i=1}^{s} n_{i} = n$.

\begin{theorem}\thlabel{imp}
Let $K$ be an algebraically closed field of characteristic zero
and $\mathcal{A}$ a subalgebra of $\mathcal{M}_{n}(K)$ whose
semisimple part $S$ is a direct sum of matrix algebras over $K$ of
dimensions $n_{1}^{2}$, $n_{2}^{2}$, ..., $n_{s}^{2}$ with
$\sum_{i=1}^{s} n_{i} = n$. Then ${\rm dim}\, \mathcal{A} \leq
\frac{n^{2}}{2} + \sum_{i=1}^{s}\frac{n_{i}^{2}}{2}$. Moreover, if
equality holds then $A$ is a parabolic subalgebra of
$\mathcal{M}_{n}(K)$.
\end{theorem}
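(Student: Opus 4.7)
The plan is to combine Wedderburn's decomposition with a careful analysis of ${\rm rad}\,\mathcal{A}$ as an $S$-bimodule inside $\mathcal{M}_n(K)$. Write $\mathcal{A} = S \oplus R$, where $R = {\rm rad}\,\mathcal{A}$. Since $S \cong \bigoplus_{i=1}^s \mathcal{M}_{n_i}(K)$ is semisimple over the algebraically closed field $K$ and its tautological action on $K^n$ has total dimension $n = \sum n_i$, this representation is forced to be the direct sum of the standard representations of the simple factors. Thus, after conjugating $\mathcal{A}$ by a suitable invertible matrix (which changes neither $\dim \mathcal{A}$ nor whether $\mathcal{A}$ is parabolic), we may assume that $S$ is the block-diagonal subalgebra of $\mathcal{M}_n(K)$ with diagonal blocks of sizes $n_1,\ldots,n_s$.

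With $S$ in this standard position, decompose
$$\mathcal{M}_n(K) \;=\; \bigoplus_{1 \leq i, j \leq s} V_{ij}$$
as an $S$-$S$-bimodule, where $V_{ij} \cong K^{n_i} \otimes_K (K^{n_j})^{*}$ is the $n_i n_j$-dimensional block of matrices supported on the rows of block $i$ and the columns of block $j$; each $V_{ij}$ is an irreducible $S$-bimodule. Because $R$ is a two-sided ideal of $\mathcal{A} \supseteq S$, it is stable under left and right multiplication by $S$, hence decomposes as $R = \bigoplus_{(i,j) \in I} V_{ij}$ for some $I \subseteq \{1,\ldots,s\}^2$. No diagonal pair $(i,i)$ lies in $I$: otherwise $V_{ii} \subseteq R$ would contain the non-zero idempotent equal to the identity matrix of the $i$-th block, contradicting nilpotency of the radical.

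Now let $G$ be the directed graph on $\{1,\ldots,s\}$ with an arrow $i \to j$ whenever $(i,j) \in I$. The identity $V_{ij}\cdot V_{jk} = V_{ik}$ combined with $R \cdot R \subseteq R$ makes $G$ \emph{transitive}, and nilpotency of $R$ makes $G$ \emph{acyclic} (any directed cycle would produce a non-nilpotent product inside $R$). A transitive acyclic relation on $s$ vertices has at most $\binom{s}{2}$ edges, one for each unordered pair. Since each $V_{ij}$ with $i \neq j$ has positive dimension $n_i n_j$, it follows that
$$\dim R \;=\; \sum_{(i,j) \in I} n_i n_j \;\leq\; \sum_{i < j} n_i n_j \;=\; \frac{n^2 - \sum_{i=1}^s n_i^2}{2},$$
and adding $\dim S = \sum_{i=1}^{s} n_i^2$ gives the asserted bound $\dim \mathcal{A} \leq \tfrac{1}{2}(n^2 + \sum n_i^2)$.

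If equality holds, $G$ must be a transitive tournament, i.e.\ the edge set of a strict total order on $\{1,\ldots,s\}$. Conjugating once more by the permutation matrix that brings this order to the natural one puts $R$ in strictly block-upper-triangular position, so $\mathcal{A}$ becomes exactly the parabolic subalgebra of type $(n_{\pi(1)},\ldots,n_{\pi(s)})$ for some permutation $\pi$, i.e.\ a parabolic subalgebra of the prescribed type up to reordering. I expect the main obstacle to be the correct identification of $R$ as a sum of whole blocks $V_{ij}$: one must use simultaneously that $R$ is $S$-bilinear (so that it decomposes along the bimodule decomposition), that each $V_{ij}$ is bimodule-irreducible, and that the sub-bimodule so obtained is both nilpotent and closed under multiplication. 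Once this framework is in place, the combinatorial bound and the identification of the equality case follow cleanly, without need for the full strength of Gerstenhaber's theorem.
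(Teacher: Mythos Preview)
Your argument is correct and takes a genuinely different route from the paper. The paper proceeds by observing that if $N \subseteq S$ denotes the direct sum of the strictly upper-triangular parts of the simple factors of $S$, then $N \oplus R$ is a nilpotent subalgebra of $\mathcal{M}_n(K)$ of dimension $\sum_i \tfrac{n_i(n_i-1)}{2} + r$; Gerstenhaber's theorem then bounds this by $\tfrac{n(n-1)}{2}$, and the inequality drops out by arithmetic. For the equality case the paper again invokes Gerstenhaber, now in its sharp form: the maximal nilpotent subspace must be conjugate to $\overline{\mathcal{U}_n(K)}$, so after conjugation $\mathcal{A}\supseteq \overline{\mathcal{U}_n(K)}$, and together with the diagonal idempotents coming from $S$ this forces $\mathcal{A}\supseteq \mathcal{U}_n(K)$. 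By contrast, you replace the appeal to Gerstenhaber with a direct structural analysis: after normalizing $S$ to block-diagonal form you decompose $R$ along the irreducible $S$-bimodule blocks $V_{ij}$ and reduce the dimension bound to the elementary fact that a loopless acyclic digraph on $s$ vertices has at most one arrow per unordered pair, so $\dim R \le \sum_{i<j} n_i n_j$. This is more self-contained (Gerstenhaber's theorem is a nontrivial black box in the paper's argument), gives finer information about the shape of $\mathcal{A}$, and handles the equality case cleanly via the fact that an acyclic tournament is a total order, so a block-permutation makes $\mathcal{A}$ literally the standard parabolic. The cost is the preliminary work of normalizing $S$ (which uses faithfulness plus the dimension constraint $\sum n_i = n$ to pin down the multiplicities) and checking that the $V_{ij}$ are pairwise non-isomorphic simple $S$-bimodules so that $R$ really splits as a sum of whole blocks; once Gerstenhaber is granted, the paper's proof is shorter.
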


\begin{proof}
By Wedderburn's main theorem \cite[Theorem 2.17]{K} we have
$\mathcal{A} = S \oplus {\rm rad}\, \mathcal{A}$. Let ${\rm dim}\,
\mathcal{A} = d$ and ${\rm dim}\, {\rm rad}\, \mathcal{A} = r$. It
follows that $\sum_{i=1}^{s} n_{i}^{2} + r = d$. Remark that each
$n_{i} \times n_{i}$ matrix subalgebra of $S$ contains its
nilpotent subalgebra of strictly upper triangular matrices. This
implies that $\mathcal{A}$ contains, all together, a subalgebra of
nilpotent matrices of dimension $\sum_{i=1}^{s}\frac{n_{i}(n_{i} -
1)}{2} + r$. Using Gerstenhaber's result (\cite{G}) on subspaces
of nilpotent matrices we must have
$\sum_{i=1}^{s}\frac{n_{i}(n_{i} - 1)}{2} + r \leq
\frac{n(n-1)}{2}$. Since $\sum_{i=1}^{s} n_{i}^{2} + r = d$ we get
$d \leq \frac{n(n-1)}{2} + \sum_{i=1}^{s}\frac{n_{i}(n_{i} +
1)}{2}$.  The conclusion now follows by using $\sum_{i=1}^{s}
n_{i} = n$.

Suppose now that equality holds. It follows that $r =
\frac{n^{2}}{2} - \sum_{i=1}^{s}\frac{n_{i}^{2}}{2}$ and so
$\mathcal{A}$ contains a subalgebra of nilpotent matrices of
dimension $\frac{n(n-1)}{2}$. Gerstenhaber's result implies that
any such subspace of $\mathcal{M}_{n}(K)$ is conjugate to
$\overline{\mathcal{U}_{n}(K)}$. Since for any invertible matrix
$C \in \mathcal{M}_{n}(K)$ the map $u: \mathcal{A} \to C
\mathcal{A} C^{-1}$ which takes any $A \in \mathcal{A}$ to
$CAC^{-1}$ is an algebra isomorphism we may assume without loss of
generality that $\mathcal{A}$ contains
$\overline{\mathcal{U}_{n}(K)}$. However, by looking at dimensions
one sees that $\overline{\mathcal{U}_{n}(K)}$ is not all of
$\mathcal{A}$. Now since the semisimple part of $\mathcal{A}$ is a
direct sum of matrix algebras of dimensions $n_{1}$, $n_{2}$, ...,
$n_{s}$ with $\sum_{i=1}^{s} n_{i} = n$ it follows that each
$e_{ii} \in \mathcal{A}$ for all $i = 1, 2, ..., n$. Therefore,
$\mathcal{A}$ contains $\mathcal{U}_{n}(K)$ and the proof is now
finished.
\end{proof}

\begin{proposition}\prlabel{1.3}
Let $K$ be an algebraically closed field of characteristic zero.
Then the proper subalgebras of maximum dimension in
$\mathcal{M}_{n}(K)$ are the parabolic subalgebras of type $(1,
n-1)$ and respectively $(n-1, 1)$.
\end{proposition}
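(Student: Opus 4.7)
The plan is to apply \thref{imp} and reduce the statement to a short combinatorial optimization over partitions of $n$.

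Given a proper subalgebra $\mathcal{A}$ of $\mathcal{M}_{n}(K)$, let $(n_{1}, \ldots, n_{s})$ be the partition attached to its semisimple part by Wedderburn's theorem, and set $m := \sum_{i=1}^{s} n_{i}$. I would first treat the case $m = n$, where \thref{imp} supplies the upper bound $\dim \mathcal{A} \leq \frac{n^{2}}{2} + \sum_{i=1}^{s} \frac{n_{i}^{2}}{2}$. Since $\mathcal{A}$ is proper, the case $s = 1$ (which would force $\mathcal{A} = \mathcal{M}_{n}(K)$) is excluded, so $s \geq 2$, and the task reduces to maximizing $\sum n_{i}^{2}$ over partitions of $n$ with at least two parts. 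The standard shifting argument --- replacing any pair $(n_{i}, n_{j})$ with $n_{i} \geq n_{j} \geq 2$ by $(n_{i}+1, n_{j}-1)$ strictly increases $\sum n_{i}^{2}$ --- drives each such partition to one of the shape $(n-s+1, 1, \ldots, 1)$. For these one has $\sum n_{i}^{2} = (n-s+1)^{2} + (s-1)$, strictly decreasing in $s$ for $s \geq 2$ and $n \geq 3$. Hence the maximum is attained uniquely (up to ordering) by the partition $\{1, n-1\}$, giving $\sum n_{i}^{2} = (n-1)^{2} + 1$ and $\dim \mathcal{A} \leq n^{2} - n + 1$.

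Next, I would rule out the case $m < n$ by re-running the Gerstenhaber-based estimate in the proof of \thref{imp} without imposing $m = n$. The inequality $\sum \frac{n_{i}(n_{i}-1)}{2} + \dim({\rm rad}\,\mathcal{A}) \leq \frac{n(n-1)}{2}$ still holds, so combining with $\dim \mathcal{A} = \sum n_{i}^{2} + \dim({\rm rad}\,\mathcal{A})$ yields $\dim \mathcal{A} \leq \frac{n(n-1)}{2} + \sum \frac{n_{i}(n_{i}+1)}{2} \leq \frac{n(n-1) + m(m+1)}{2}$, and for $m \leq n-1$ the right-hand side is at most $n(n-1)$, strictly less than $n^{2} - n + 1$. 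Thus no proper $\mathcal{A}$ with $m < n$ can be extremal, and the maximum $n^{2} - n + 1$ is realized only when $m = n$ and the underlying partition is $(1, n-1)$ or $(n-1, 1)$.

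To close, I would invoke the equality clause of \thref{imp}: any extremal $\mathcal{A}$ must be parabolic, and by the previous step its type is forced to be $(1, n-1)$ or $(n-1, 1)$. Conversely, the formula $\dim \mathcal{A} = \frac{n^{2}}{2} + \sum \frac{n_{i}^{2}}{2}$ for parabolic subalgebras recorded just before the definition of type confirms that both types attain $n^{2} - n + 1$. The main obstacle I anticipate is the combinatorial optimization itself --- establishing uniqueness of the extremal partition rather than merely its attainment --- together with cleanly handling the auxiliary $m < n$ case, which is not directly covered by the statement of \thref{imp} but yields to a brief rerun of its proof.
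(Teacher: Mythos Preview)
Your proof is correct and follows essentially the same route as the paper: invoke \thref{imp} to reduce the question to maximizing $\sum n_i^2$ over partitions of $n$ with at least two parts, carry out the elementary optimization, and then use the equality clause of \thref{imp}. The only differences are cosmetic---your shifting argument fixes $s$ and drives to $(n-s+1,1,\dots,1)$ before varying $s$, while the paper instead merges the last two parts to decrease $s$ directly---and you handle the case $\sum n_i < n$ explicitly by re-running the Gerstenhaber estimate, whereas the paper disposes of it only via the informal remark in the paragraph preceding \thref{imp}.
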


\begin{proof}
By \thref{imp} we know that the upper bound for the dimension of a
subalgebra in $\mathcal{M}_{n}(K)$ is attained when the subalgebra
is parabolic. In what follows we will prove that this bound is
maximal precisely when the subalgebra is parabolic of type $(1,
n-1)$ and respectively $(n-1, 1)$. Since the dimension of a
parabolic subalgebra of $\mathcal{M}_{n}(K)$ is $\frac{n^{2}}{2} +
\sum_{i=1}^{s}\frac{n_{i}^{2}}{2}$ in order to maximize its
dimension we need to have $\sum_{i=1}^{s} n_{i}^{2}$ as large as
possible. We denote by $S := n_{1}^{2} + n_{2}^{2} + ... +
n_{i}^{2} + (n - n_{1} - n_{2} - ... - n_{i})^{2}$ and by $S' :=
n_{1}^{2} + n_{2}^{2} + ... + n_{i-1}^{2} + (n - n_{1} - n_{2} -
... - n_{i-1})^{2}$, where $n_{t}$ are positive integers for all
$t = 1, 2, ..., i$, such that $n_{1} + n_{2} + ... + n_{i} < n$.
The proof will be finished once we show that $S < S'$. Indeed,
this follows by noticing that $S = S' - 2n_{i}\,(n - n_{1} - n_{2}
- ... - n_{i}) < S'$. Therefore we need to consider parabolic
subalgebras of type $(l, n-l)$, with $l \geq 1$. Then
$\sum_{i=1}^{s} n_{i}^{2} = l^{2} + (n-l)^{2}$. Now it can easily
be seen that for $ 1< l < n-1$ we have $l^{2} + (n-l)^{2} < 1 +
(n-1)^{2}$ and the conclusion follows.
\end{proof}

\begin{corollary}\colabel{maxi}
Let $K$ be a field of characteristic zero. Then the maximal dimension of a proper subalgebra of the matrix algebra
$\mathcal{M}_{n}(K)$ is $n^2 - n + 1$.
\end{corollary}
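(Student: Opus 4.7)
The plan is to split the argument into two parts: a lower bound via an explicit construction valid over any field, and an upper bound obtained by extending scalars to reduce to the algebraically closed case already handled in \prref{1.3}.

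For the lower bound I would exhibit the parabolic subalgebra of type $(1,n-1)$ in a form that makes sense over any field, namely
\[
\Pp := \{\,(a_{ij}) \in \Mm_n(K) \mid a_{i,1} = 0 \text{ for all } i \geq 2\,\}.
\]
A direct check shows $\Pp$ is closed under multiplication, contains the identity, and has dimension $1 + n(n-1) = n^2 - n + 1 < n^2$. Hence $\Pp$ is a proper unital subalgebra of $\Mm_n(K)$ realizing the claimed value. No characteristic hypothesis is needed for this half.

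For the upper bound, let $\Aa \subsetneq \Mm_n(K)$ be an arbitrary proper unital subalgebra, and fix an algebraic closure $\overline{K}$ of $K$. The idea is to pass to $\Aa \otimes_K \overline{K}$, which naturally sits as a unital $\overline{K}$-subalgebra of $\Mm_n(K) \otimes_K \overline{K} \cong \Mm_n(\overline{K})$. Flatness of $\overline{K}$ over $K$ preserves dimensions, so
\[
\dim_{\overline{K}}\bigl(\Aa \otimes_K \overline{K}\bigr) \;=\; \dim_K \Aa \;<\; n^2,
\]
and therefore $\Aa \otimes_K \overline{K}$ is a proper subalgebra of $\Mm_n(\overline{K})$. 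Since $\overline{K}$ is algebraically closed of characteristic zero, \prref{1.3} applies and yields $\dim_{\overline{K}}\bigl(\Aa \otimes_K \overline{K}\bigr) \leq n^2 - n + 1$, hence $\dim_K \Aa \leq n^2 - n + 1$, matching the lower bound from $\Pp$.

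I do not expect a substantial obstacle here; the only facts required beyond the previous results are the standard dimension and flatness properties of base change along a field extension, together with the observation that passage to the algebraic closure preserves characteristic zero, which is exactly what licenses the invocation of \prref{1.3} at the decisive step.
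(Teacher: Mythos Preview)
Your proof is correct and follows essentially the same approach as the paper: reduce the upper bound to the algebraically closed case via scalar extension $\Aa \mapsto \Aa \otimes_K \overline{K}$ and invoke \prref{1.3}. You are in fact a bit more explicit than the paper, which does not spell out the lower-bound construction or the properness of the base-changed subalgebra, but these are details the paper leaves implicit rather than genuine differences in strategy.
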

\begin{proof}
If $K$ is an algebraically closed field of characteristic zero then the assertion follows from \prref{1.3}. We will prove that the algebraically closed assumption on the field $K$ can be dropped by simply extending the coefficients to the algebraic closure $\overline{K}$ of $K$. Indeed, let $A$ be a $K$-subalgebra  of $\mathcal{M}_{n}(K)$. We have:
\begin{eqnarray*}
\overline{K} \simeq \overline{K} \ot_{K} K \subset \overline{K} \ot_{K} A \subseteq \overline{K} \ot_{K} \mathcal{M}_{n}(K) \simeq \mathcal{M}_{n}(\overline{K})
\end{eqnarray*}
where the last isomorphism follows from \cite[Lemma 7.130]{rotman}. Therefore, $\overline{K} \ot_{K} A$ is a $\overline{K}$-subalgebra of $\mathcal{M}_{n}(\overline{K})$ and since ${\rm dim}_{K} (A) = {\rm dim}_{\overline{K}}\, (\overline{K} \ot_{K} A)$ the conclusion follows.
\end{proof}

We end the paper by looking at the matrix coalgebra case. It is
well-known that $X \subseteq \mathcal{M}^{n}(K)$ is a coideal if
and only if $X^{\perp}$ is a subalgebra of $\mathcal{M}_{n}(K)$
(see \cite[Proposition 1.4.6]{SW} for a more general statement).
In light of this bijection we introduce the following:

\begin{definition}
A coideal $X \subseteq \mathcal{M}^{n}(K)$ will be called
parabolic if $X^{\perp}$ is a parabolic subalgebra of
$\mathcal{M}_{n}(K)$. If $X^{\perp}$ is a parabolic subalgebra of
type $(n_{1}, \, n_{2}, ..., n_{s})$ then $X$ will be called a
parabolic coideal of type $(n_{1}, \, n_{2}, ..., n_{s})$ as well.
If $s=2$ then $X$ will be called a minimal parabolic coideal.
\end{definition}

The parabolic coideals of $\mathcal{M}^{n}(K)$ can be
characterized as follows:

\begin{proposition}
Let $X$ be a parabolic coideal of type $(n_{1}, \, n_{2}, ...,
n_{s})$ of the matrix coalgebra $\mathcal{M}^{n}(K)$. Then $X$ is
the coideal of all matrices having non-overlapping blocks of
$n_{i} \times n_{i}$ matrices on the diagonal with non-zero
entries only below these blocks.
\end{proposition}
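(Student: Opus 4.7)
The plan is a direct duality computation. By the definition of parabolic coideal, $X^\perp$ is a parabolic subalgebra of type $(n_1,\ldots,n_s)$, so after conjugating by a suitable invertible matrix $U$ we may work with the standard model in which $X^\perp$ is spanned by the matrix units $e_{i,j}$ whose position $(i,j)$ lies either within a diagonal block of size $n_k\times n_k$ or strictly above these blocks. Since $X$ is a subspace of a finite dimensional space, $X=(X^\perp)^\perp$, so it suffices to describe the annihilator of the standard model.

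The key observation is that, under the canonical pairing $\langle e_{a,b},\, e_{i,j}\rangle = \delta_{a,i}\delta_{b,j}$ realizing the isomorphism $\mathcal{M}_{n}(K)^{*} \cong \mathcal{M}^{n}(K)$ (as can be checked from the comultiplication formula $\Delta(e_{i,j})=\sum_{k} e_{i,k}\otimes e_{k,j}$), the dual basis of the matrix unit basis of $\mathcal{M}_{n}(K)$ is precisely the matrix unit basis of $\mathcal{M}^{n}(K)$. Hence the annihilator of any span of matrix units is simply the span of the complementary matrix units. Applied to $X^\perp$ in the standard model, this immediately yields that $X$ is the span of those $e_{i,j}$ with $(i,j)$ \emph{not} lying in a diagonal block and not strictly above, i.e.\ strictly below the diagonal blocks, which is exactly the coideal described in the proposition.

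As a sanity check, a dimension count gives
$$\dim X = n^2 - \dim X^\perp = n^2 - \Bigl(\frac{n^2}{2} + \sum_{i=1}^{s} \frac{n_i^2}{2}\Bigr) = \frac{n^2 - \sum_{i=1}^{s} n_i^2}{2},$$
which matches the cardinality of the strictly-below-block positions (exactly half of the $n^2-\sum n_i^2$ off-block positions, by the symmetry between above and below).

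The only bookkeeping step is to verify that the conjugation by $U$ used to bring $X^\perp$ into standard form corresponds, under the duality $\mathcal{M}_{n}(K)^{*}\cong \mathcal{M}^{n}(K)$, to a coalgebra automorphism of $\mathcal{M}^{n}(K)$; this transports the explicit description from the standard model back to the general case, up to similarity. I do not expect any serious obstacle here, since the entire argument is just the combinatorial observation that passing to the annihilator in a coordinate subspace swaps the chosen and unchosen basis indices.
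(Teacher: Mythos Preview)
Your proposal is correct and follows the same approach as the paper, which simply notes that the conclusion follows from the algebra isomorphism $\mathcal{M}^{n}(K)^{*}\simeq\mathcal{M}_{n}(K)$ and a straightforward annihilator computation. You have in fact spelled out more carefully than the paper does the pairing, the double-annihilator identity $X=(X^{\perp})^{\perp}$, and the role of conjugation, all of which the paper leaves implicit.
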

\begin{proof}
If $\mathcal{A}$ is a subalgebra of $\mathcal{M}^{n}(K)^{*} \simeq
\mathcal{M}_{n}(K)$ (isomorphism of algebras) then
$\mathcal{A}^{\perp}$ is a coideal in $\mathcal{M}^{n}(K)$. The
conclusion follows by a straightforward computation.
\end{proof}

\begin{proposition}
The minimal parabolic coideals are minimal proper coideals of
$\mathcal{M}^{n}(K)$.
\end{proposition}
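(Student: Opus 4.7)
The plan is to exploit the inclusion-reversing bijection between coideals of $\mathcal{M}^{n}(K)$ and subalgebras of $\mathcal{M}_{n}(K)$, given by $X \mapsto X^{\perp}$, which is the content of the statement preceding the definition of a parabolic coideal (referenced from \cite[Proposition 1.4.6]{SW}). Under this correspondence the two extreme cases are $0^{\perp} = \mathcal{M}_{n}(K)$ and $\mathcal{M}^{n}(K)^{\perp} = 0$, so maximal proper subalgebras correspond bijectively to minimal non-zero (equivalently, minimal proper) coideals.

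First I would observe that if $X$ is a minimal parabolic coideal of $\mathcal{M}^{n}(K)$, then by the definition just given, $X^{\perp}$ is a maximal parabolic subalgebra of $\mathcal{M}_{n}(K)$, i.e. a parabolic subalgebra of type $(n_{1}, n_{2})$ with $s = 2$. Then I would invoke \prref{1.2}, which asserts precisely that such a subalgebra is a maximal proper subalgebra of $\mathcal{M}_{n}(K)$: there is no subalgebra $\mathcal{T}$ satisfying $X^{\perp} \subsetneq \mathcal{T} \subsetneq \mathcal{M}_{n}(K)$.

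Applying the order-reversing bijection to this chain, any coideal $Y$ with $0 \subsetneq Y \subsetneq X$ would produce a subalgebra $Y^{\perp}$ with $X^{\perp} \subsetneq Y^{\perp} \subsetneq \mathcal{M}_{n}(K)$, contradicting maximality of $X^{\perp}$. Hence no such $Y$ exists and $X$ is a minimal proper coideal of $\mathcal{M}^{n}(K)$, as claimed.

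Essentially no obstacle is expected here: the entire statement is a direct dualisation of \prref{1.2} via the perp-correspondence, so the only care needed is to verify that the bijection is indeed inclusion-reversing and that the improper endpoints match as stated.
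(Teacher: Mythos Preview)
Your proof is correct and follows essentially the same route as the paper: take a minimal parabolic coideal $X$, pass to the maximal parabolic subalgebra $X^{\perp}$, invoke \prref{1.2}, and use the order-reversing perp-correspondence to conclude that any coideal strictly inside $X$ must be $\{0\}$. The only difference is cosmetic: you state the bijection and its endpoints up front, whereas the paper simply assumes a proper sub-coideal $Y\subsetneq X$ and deduces $Y^{\perp}=\mathcal{M}_{n}(K)$, hence $Y=\{0\}$.
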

\begin{proof}
Let $X$ be a minimal parabolic coideal of type $(l, n-l)$ and
assume there exists a coideal $Y$ of $\mathcal{M}^{n}(K)$ such
that $Y \subset X$, $Y \neq X$. Then $X^{\perp} \subset Y^{\perp}$
and $X^{\perp}$ is a maximal parabolic subalgebra of
$\mathcal{M}_{n}(K)$. Using \prref{1.2} we obtain $Y^{\perp} =
\mathcal{M}_{n}(K)$. This implies $Y = \{0\}$ and the proof is
finished.
\end{proof}


\begin{proposition}
Let $K$ be an algebraically closed field of characteristic zero.
Then the non-zero coideals of minimal dimension in
$\mathcal{M}^{n}(K)$ are those parabolic coideals $X$ for which
$X^{\perp}$ is a parabolic subalgebra of type $(1, n-1)$ and
respectively $(n-1, 1)$.
\end{proposition}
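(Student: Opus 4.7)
The plan is to exploit the perpendicular duality between subspaces of $\mathcal{M}_{n}(K)$ and subspaces of $\mathcal{M}^{n}(K)$, which, as already recalled in the excerpt, restricts to a bijection between subalgebras of $\mathcal{M}_{n}(K)$ and coideals of $\mathcal{M}^{n}(K)$ via $\mathcal{A} \mapsto \mathcal{A}^{\perp}$ (and with inverse $X \mapsto X^{\perp}$, using $X^{\perp\perp} = X$ in finite dimension).

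The first step is the standard dimension identity $\dim_{K} X + \dim_{K} X^{\perp} = n^{2}$, valid for any subspace $X$ of the $n^{2}$-dimensional space $\mathcal{M}^{n}(K)$. Under the bijection above, a coideal $X$ is non-zero if and only if $X^{\perp}$ is a proper subalgebra of $\mathcal{M}_{n}(K)$. Consequently, minimizing $\dim_{K} X$ among non-zero coideals is equivalent to maximizing $\dim_{K} X^{\perp}$ among proper subalgebras of $\mathcal{M}_{n}(K)$.

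The second step is to invoke \prref{1.3}: over an algebraically closed field of characteristic zero, the proper subalgebras of $\mathcal{M}_{n}(K)$ of maximal dimension are precisely the parabolic subalgebras of type $(1, n-1)$ and $(n-1, 1)$, both of dimension $n^{2}-n+1$. Translating back via the duality, the non-zero coideals of $\mathcal{M}^{n}(K)$ of minimal dimension are exactly those $X$ for which $X^{\perp}$ is parabolic of type $(1, n-1)$ or $(n-1, 1)$, and their common dimension is $n^{2} - (n^{2}-n+1) = n-1$.

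There is essentially no obstacle here: the work has already been done in the algebra side. The only point that requires a (very brief) verification is that the parabolic-coideal/parabolic-subalgebra correspondence from the definition introduced just above matches the perpendicular duality in the expected way, which is immediate from the definition of parabolic coideal. Thus the proposition reduces to a direct transportation of \prref{1.3} across the perpendicular bijection.
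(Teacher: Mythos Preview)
Your proof is correct and follows essentially the same approach as the paper: both use the dimension identity $\dim_K X + \dim_K X^{\perp} = n^2$ (the paper phrases it as $\dim_K X^{\perp} = \dim_K \mathcal{M}^{n}(K)/X$), combine it with the coideal/subalgebra bijection to convert the minimization problem into the maximization problem for proper subalgebras, and then invoke \prref{1.3}.
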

\begin{proof}
For any finite dimensional vector space $V$, and any subspace $X$
of $V^{*}$ we have ${\rm dim }_{K}\, X^{\perp} = {\rm dim}_{K}\,
V^{*}/X$. Therefore, for any coideal $X$ in $\mathcal{M}^{n}(K)$
we have ${\rm dim}_{K}\, X^{\perp} = {\rm dim}_{K}\,
\mathcal{M}^{n}(K) /X$. Collaborating this result with the
bijection between the coideals of $\mathcal{M}^{n}(K)$ and the
subalgebras of $\mathcal{M}_{n}(K)$ (\cite{SW}), it follows that a
coideal $X$ in $\mathcal{M}^{n}(K)$ has maximal codimension
precisely when the subalgebra $X^{\perp}$ of $\mathcal{M}_{n}(K)$
has maximal dimension. The conclusion now follows by \prref{1.3}.
\end{proof}

\begin{corollary}
Let $K$ be a field of characteristic zero. Then the minimal dimension of a
non-zero coideal in $\mathcal{M}^{n}(K)$ is $n - 1$.
\end{corollary}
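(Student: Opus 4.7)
The plan is to derive this directly from \coref{maxi} via the coideal/subalgebra duality, without re-invoking the Wedderburn/Gerstenhaber machinery. The key observation is that the previous proposition already handles the algebraically closed case and its proof ultimately leans only on the fact that codimension on the coalgebra side equals dimension on the algebra side. So I want to repackage that bijection one more time, but now feeding in \coref{maxi} rather than \prref{1.3}, since \coref{maxi} is precisely the version valid over an arbitrary field of characteristic zero.

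First I would recall the standard perp-duality for a finite dimensional vector space: if $V$ is finite dimensional and $X \subseteq V^{*}$ is a subspace, then $\dim_K X^\perp = \dim_K V - \dim_K X$, so in particular $\dim_K X + \dim_K X^\perp = \dim_K V = n^2$ when $V = \mathcal{M}^{n}(K)$. Combined with the bijection $X \leftrightarrow X^\perp$ between coideals of $\mathcal{M}^{n}(K)$ and subalgebras of $\mathcal{M}^{n}(K)^{*} \simeq \mathcal{M}_{n}(K)$ (cited earlier from \cite[Proposition 1.4.6]{SW}), this gives an inclusion-reversing bijection under which $X$ is non-zero iff $X^\perp$ is a proper subalgebra of $\mathcal{M}_{n}(K)$.

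Second, I would minimize $\dim_K X$ over non-zero coideals $X$. By the displayed identity $\dim_K X = n^2 - \dim_K X^\perp$, minimizing $\dim_K X$ over non-zero coideals is the same as maximizing $\dim_K X^\perp$ over proper subalgebras of $\mathcal{M}_{n}(K)$. By \coref{maxi}, which is valid over any field of characteristic zero, the latter maximum equals $n^2 - n + 1$. Substituting, the minimal dimension of a non-zero coideal in $\mathcal{M}^{n}(K)$ equals $n^2 - (n^2 - n + 1) = n - 1$, as claimed.

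There is no real obstacle here; the argument is a one-line consequence once the bijection and \coref{maxi} are in place. The only point worth being careful about is not to accidentally invoke the preceding proposition, which is stated over an algebraically closed field, since the corollary is asserted over any field of characteristic zero; the correct input from the preceding material is \coref{maxi}, whose statement has already been freed from the algebraic closure hypothesis by the scalar extension argument $A \mapsto \overline{K} \otimes_K A$.
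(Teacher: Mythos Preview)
Your proof is correct and is precisely the argument the paper intends: the corollary is stated in the paper without proof, as the coalgebra analogue of \coref{maxi}, and your reasoning supplies exactly the expected one-line duality computation. You correctly identify that the right input is \coref{maxi} rather than \prref{1.3}, so that no algebraic closure hypothesis is needed, and the dimension identity $\dim_K X + \dim_K X^{\perp} = n^2$ together with the coideal/subalgebra bijection from \cite{SW} does the rest.
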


\end{document}